\theoremstyle{definition}
\newtheorem{coroll}{Corollary}
\newtheorem*{remark*}{Remark}
\newtheorem*{prop*}{Proposition}
\theoremstyle{definition}
\newtheorem{defi}{Definition}
\newtheorem{example}{\bf Example}
\theoremstyle{remark}
\newtheorem{remark}{\bf Remark}
\patchcmd{\thmhead}{(#3)}{#3}{}{}
\renewenvironment{proof}[1][\proofname]{
  \par\pushQED{\qed}\normalfont
  \topsep6\p@\@plus6\p@\relax
  \trivlist\item[\hskip\labelsep\bfseries\itshape #1\@addpunct{.}]
}{\popQED\endtrivlist\@endpefalse}
\patchcmd{\thmhead}{(#3)}{#3}{}{}
\let\phi\varphi
\DeclareMathOperator{\G}{\mathbb{G}}
\DeclareMathOperator{\inv}{\mathrm{inv}}
\DeclareMathOperator{\Inv}{\mathrm{Inv}}
\DeclareMathOperator{\Pp}{\mathcal{P}}
\DeclareMathOperator{\X}{\mathbb{X}}
\DeclareMathOperator{\Sym}{\mathrm{Sym}}
\DeclareMathOperator{\Xx}{\mathfrak{X}}
\DeclareMathOperator{\Free}{\mathrm{Free}}
\DeclareMathOperator{\R}{\mathbb{R}}
\DeclareMathOperator{\diag}{\mathrm{diag}}
  \DeclareSymbolFont{ntxletters}{OML}{ntxmi}{m}{it}
  \re@DeclareMathSymbol{\leftharpoonup}{\mathrel}{ntxletters}{"28}
  \re@DeclareMathSymbol{\leftharpoondown}{\mathrel}{ntxletters}{"29}
  \re@DeclareMathSymbol{\rightharpoonup}{\mathrel}{ntxletters}{"2A}
  \re@DeclareMathSymbol{\rightharpoondown}{\mathrel}{ntxletters}{"2B}
  \re@DeclareMathSymbol{\triangleleft}{\mathbin}{ntxletters}{"2F}
  \re@DeclareMathSymbol{\triangleright}{\mathbin}{ntxletters}{"2E}
  \re@DeclareMathSymbol{\partial}{\mathord}{ntxletters}{"40}
  \re@DeclareMathSymbol{\flat}{\mathord}{ntxletters}{"5B}
  \re@DeclareMathSymbol{\natural}{\mathord}{ntxletters}{"5C}
  \re@DeclareMathSymbol{\star}{\mathbin}{ntxletters}{"3F}
  \re@DeclareMathSymbol{\smile}{\mathrel}{ntxletters}{"5E}
  \re@DeclareMathSymbol{\frown}{\mathrel}{ntxletters}{"5F}
  \re@DeclareMathSymbol{\sharp}{\mathord}{ntxletters}{"5D}
  \re@DeclareMathAccent{\vec}{\mathord}{ntxletters}{"7E}
\renewcommand{\epsilon}{\varepsilon}
\DeclareFontFamily{U}{EBSUB}{}
\DeclareFontShape{U}{EBSUB}{m}{it}{<-> EBGaramond-Italic-tlf-lgr}{}
\DeclareFontFamily{U}{EBSUB}{}
\DeclareSymbolFont{EBSUB}{U}{EBSUB}{m}{it}
\DeclareMathSymbol{\mu}{\mathord}{EBSUB}{`m}
\DeclareMathSymbol{\varDelta}{\mathord}{EBSUB}{`D}
\DeclareMathSymbol{\varOmega}{\mathord}{EBSUB}{`W}
\DeclareSymbolFont{ntxletters}{OML}{ntxmi}{m}{it} 
\DeclareMathSymbol{\lhook}{\mathrel}{ntxletters}{"2C}
\newcommand\unmarkedfootnote[1]{%
  \begingroup
  \renewcommand\thefootnote{}\renewcommand\footnotemark{}
  \footnote{#1}%
  \endgroup
}
\title{\bf On the Classification of $\bm{n}$-Valued\\ Monoids and Groups of Order 3\unmarkedfootnote{The work was funded within the framework of the HSE University Basic Research Program.}}
\date{}
\author{Mikhail Kornev}
\begin{document}

\maketitle

\begin{abstract}
The article presents a complete classification of $n$-valued monoids and groups of order 3. Important corollaries of this result are discussed.
\end{abstract}

\tableofcontents

\section{Introduction}\label{sec:intro}\label{intro}

In 1971, V.\,M. Buchstaber and S.\,P. Novikov proposed a construction motivated by the theory of characteristic classes \cite{Buchstaber_Novikov}. This construction describes a multiplication in which the product of any pair of elements is a multiset of $n$ points. An axiomatic definition of $n$-valued groups and the first results of their algebraic theory were obtained in a subsequent series of works by V.\,M. Buchstaber. Currently, the theory of $n$-valued (formal, finite, discrete, topological, and algebro-geometric) groups and their applications in various areas of mathematics and mathematical physics are being developed by a number of authors; see \cite{Buchstaber75, Kholodov81, Buchstaber90, Buchstaber_Veselov, Vershik, BuchRees, Buchstaber, BuchVesGaif, BuchVesnin, Kontsevich_type_polynomials} and the references therein. A comparative analysis of the theories of 1-valued and $n$-valued groups can be found in \cite{Borovik}.

We refer the reader to \cite{Buchstaber} for the foundational definitions and constructions in the theory of $n$-valued groups.

We introduce the notion of a non-reversible $n$-valued group (Definition \ref{reversible}). It turns out that, up to now, no explicit constructions of non-reversible groups have appeared in the existing literature. As noted by V. Buchstaber, an incorrect example of a non-reversible 2-valued group was given in \cite[page 246]{Borovik}. In the present note, the author constructs a family of non-reversible $n$-valued groups of order 3 for all $n \geqslant 3$ (Example \ref{nonreversible_example}).

A natural and well-known question is the classification of $n$-valued monoids and groups of order 3 up to isomorphism. In \cite{Vershik}, the notion of involutive $n$-valued groups was introduced (we refer to them as $\bigstar$-involutive, see Definition 6). It was shown that such groups can be identified with certain combinatorial algebras. A specific series (\ref{4k+3}) of $\bigstar$-involutive groups on three points was also constructed. The complete classification of all coset $\bigstar$-involutive $n$-valued groups of order 3 was achieved in \cite{Ponomarenko_24, Ponomarenko_Vasiliev_24}, based on the classification of all single-valued finite permutation groups of rank 3 (see references therein).

The main result of this work is the complete classification of $n$-valued monoids and groups of order 3 (see Proposition from Section \ref{class_section}), along with important consequences arising from a comparison with the results of \cite{Ponomarenko_Vasiliev_24}. This leads in Section \ref{props} to several conclusions concerning the relationships among the currently known classes of $n$-valued groups. Corollary \ref{1-monoids} reproduces the classical result \cite{OEIS, monoids_classification} that there are exactly 7 isomorphism classes of monoids of order 3. Corollary \ref{cor_1} says that there exist only two families of $n$-valued groups of order 3. According to Corollary  \ref{cor_2}, every noncommutative $n$-valued monoid of order 3 is isomorphic to one of the two noncommutative single-valued monoids with multiplication tables:
\begin{equation}\label{1-non-comm-monoids}
\begin{aligned}
1\ast 1 &= 1, &\qquad 1\ast 1 &= 1,\\
1\ast 2 &= {\textcolor{blue}1}, &\qquad 1\ast 2 &= {\textcolor{blue}2},\\
2\ast 1 &= {\textcolor{blue}2}, &\qquad 2\ast 1 &= {\textcolor{blue}1},\\
2\ast 2 &= 2. &\qquad 2\ast 2 &= 2.
\end{aligned}
\end{equation}By Corollary \ref{cor_3}, there exists only one family of non-reversible $n$-valued groups of order 3. In Example \ref{nonreversible_example}, an explicit family of nonreversible groups $\mathbb{X}_n$ of order $3$ is presented. Corollary \ref{cor_4} completes the classification of all $\bigstar$-involutive groups of order $3$. In \cite{BuchVesGaif}, in connection with Conway’s topograph, the notion of an involutive 2-valued group was introduced. We generalize this notion to arbitrary $n$ in Definition \ref{involutive_group}. Corollary \ref{cor_5} provides the classification of all such groups of order 3.

Thus, the results of the present work yield the rightmost inclusion in the following diagram of all possible strict embeddings among $n$-valued groups of arbitrary order: 
\begin{equation}\label{inclusion_diagram}
\begin{tikzcd}[row sep=2em, column sep=3em, ampersand replacement=\&]
\begin{aligned}\text{Commutative}\\ \text{Involutive}\end{aligned}
\arrow[r, hook]
\& \bigstar\text{-Involutive}
\arrow[r, hook]
\& \text{Reversible} \\[1em]
\& \text{Coset}
\arrow[u, hook]
\end{tikzcd}
\end{equation}

The author gratefully acknowledges Victor M. Buchstaber for posing the problems and for helpful discussions, and Andrey V. Vasil’ev for his interest in this work and for valuable comments.

\section{Classification}\label{class_section}

We introduce the following:

\begin{defi}
An {\it $n$-valued monoid} is a set $X$ equipped with an operation
$$\ast: X\times X\to \Sym^n(X),$$
where $\Sym^n(X) = X^{\times n}/S_n$ is the set of unordered $n$-tuples of elements of $X$. This operation must satisfy all axioms of an $n$-valued group from \cite{Buchstaber}, except for the axiom asserting the existence of an inverse map, that is:
\begin{itemize}
\item {\it Associativity.} The $n^2$-multisets
\begin{gather*}[x*w\mid w\in y \ast z],\\
[w\ast z\mid w\in x*y]
\end{gather*} 
coincide.

\item {\it Unit.} There exists an element $e\in X$ such that $$e\ast x = x\ast e = [x, x, …, x]$$ for every $x\in X$.
\end{itemize}
\end{defi}

We say that the order of an $n$-valued monoid $X$ is $k$ if $|X| = k$. Throughout, we identify the set $\Sym^n(X)$ with a subset of the ordinary free commutative monoid $\Free(X)$ generated by $X$. In this way, we regard the $n$-valued group $X$ as an $n$-valued analogue of a monoid algebra.

Consider the set $X = \{e, x_1, x_2, …, x_{k-1}\}$ (we assume $x_0 = e$). Suppose $X$ is equipped with the structure of an $n$-valued monoid. Then the corresponding multiplication table can be conveniently written as:
\begin{equation*}
\Xx^{\circledast 2} = A\Xx,
\end{equation*}
where $\Xx$ denotes the column vector $(e, x_1, x_2, …, x_{k-1})^{T}$, and $A$ is an $k^2\times k$ matrix whose rows record the multiplicities of the elements $e, x_1, …, x_{k-1}$ in the product $x_i\ast x_j$ for $i, j = 0, …, k-1$. Moreover, in each row of the matrix $A$, the sum of all entries equals $n$. Finally, $\Xx^{\circledast 2} = \Xx\circledast \Xx$ denotes the Kronecker product of the vector $\Xx$ with itself, taken with respect to the operation $\ast$. For example,
$$
\begin{pmatrix}x_1\\ x_2\end{pmatrix}^{\circledast 2} =
\begin{pmatrix}
x_1\ast x_1\\
x_1\ast x_2\\
x_2\ast x_1\\
x_2\ast x_2
\end{pmatrix}.
$$

The associativity conditions then take the form:
\begin{equation}\label{associativity_conditions}
(\Xx\circledast \Xx)\circledast\Xx = \Xx\circledast(\Xx\circledast \Xx).
\end{equation}

We will need the following:

\begin{defi}\label{monoid_isomorphisms}
An $n$-valued monoid $X$ is said to be {\it isomorphic} to an $n’$-valued monoid $X’$ if there exists a bijection $\phi: X\to X’$ preserving the unit, $\phi(e) = e’$, such that
$$
\frac{m^z_{x, y}}{n} = \frac{m^{\phi(z)}_{\phi(x), \phi(y)}}{n’},
$$
where $m^z_{x, y}$ denotes the multiplicity of the element $z$ in the product $x\ast y$.
\end{defi}

Recall from \cite[Lemma 1]{Buchstaber} that for any natural number $m$, one can construct an $mn$-valued monoid $G’$ on the same set $X$ from an $n$-valued monoid $G$ with multiplication $\mu$, via the diagonal map $\diag_m(x) = mx$:
\begin{equation}\label{diagonal}
\mu’: X \times X \xrightarrow{\mu} \Sym^n X \xrightarrow{\diag_m} \Sym^{mn}{X}.
\end{equation}

We are now ready to state one of the main results of this paper:

\begin{prop*}\label{class}
Let $X = \{e, x_1, x_2\}$ and
\begin{equation}\label{vector_equality}
\begin{pmatrix}x_1\\ x_2\end{pmatrix}^{\circledast 2}  =
\begin{pmatrix}
a_0 & a_1 & a_2\\
b_0 & b_1 & b_2\\
d_0 & d_1 & d_2\\
c_0 & c_1 & c_2
\end{pmatrix}\Xx = B\Xx.
\end{equation}
Then every $n$-valued monoid on the set $X$ is represented by one of the following matrices $B = B_j$, provided that all fractional expressions yield non-negative integers:

$$
B_1 = \begin{pmatrix}
a_0 & n - a_0 & 0 \\
0 & 0 & n \\
0 & 0 & n \\
c_0 & \frac{c_0 n}{a_0} & n - c_0 -  \frac{c_0 n}{a_0} \\
\end{pmatrix},
$$

$$B_2 = \begin{pmatrix} a_0 & a_1 & n - a_0 - a_1 \\ b_0 & b_1 & n - b_0 - b_1 \\ b_0 & b_1 & n - b_0 - b_1\\ \frac{b_0(n-b_0-b_1)+a_0b_1-a_1b_0}{n-a_0-a_1} & \frac{(b_0+b_1)(n-b_1)}{n-a_0-a_1} & \frac{(n- b_0-b_1)^2 + n(b_1-a_0-a_1) + a_1b_0 - a_0b_1}{n-a_0-a_1} \end{pmatrix},$$

$$B_3 = \begin{pmatrix} 0 & n & 0 \\ 0 & n & 0\\ 0 & n & 0\\ c_0 & c_1 & n-c_0-c_1 \end{pmatrix}, \ B_4 = \begin{pmatrix} 0 & n & 0\\ 0 & 0 & n\\ 0 & 0 & n\\ 0 & c_1 & n-c_1 \end{pmatrix},$$

$$B_5 = \begin{pmatrix} 0 & n & 0\\ 0 & n & 0\\ 0 & 0 & n \\ 0 & 0 & n\end{pmatrix}, \ B_6 = \begin{pmatrix} 0 & n & 0\\ 0 & 0 & n\\ 0 & n & 0\\ 0 & 0 & n \end{pmatrix}.$$

Moreover, two monoids corresponding to the series $B_i$ and $B_j$ ($i$ and $j$ not necessarily distinct) are isomorphic if and only if the matrix $B_i$ is proportional to the matrix $B_j$, or to the matrix $\phi(B_j)$, where $\phi: X \to X$ is an isomorphism of monoids such that $\phi(x_1) = x_2$.

\end{prop*}

\begin{proof}
The associativity conditions (\ref{associativity_conditions}) define in the parameter space $\R^{12}$ an algebraic set given by the intersection of 81 quadrics.

This set is contained in the solutions of the following system of 18 equations:
\begin{equation}\label{non-full_associativity_conditions}
(x_i\ast x_j)\ast x_k = x_i\ast(x_j\ast x_k),
\end{equation}
where the indices $i, j, k \in \{1,2\}$ are not all equal. For instance, the three equations corresponding to the associativity condition $(x_1\ast x_1)\ast x_2 = x_1\ast (x_1\ast x_2)$ take the form:
\begin{equation*}
\left\{
\begin{aligned}
&a_1 b_0 - a_0 b_1 - b_0 b_2 + a_2 c_0 &= 0,\\
&a_2 c_1 - b_1 b_2 - n b_0 &= 0,\\
&a_1 b_2 -a_2 b_1- b_2^2 + a_2 c_2 + n a_0 &= 0,\\
 \end{aligned}
\right.
\end{equation*}

Solving the system (\ref{non-full_associativity_conditions}) of quadratic equations over the reals, subject to the conditions $n \neq 0$ and the requirement that the sum of entries in each row equals $n$ (four more linear equations), using the {\tt Reduce} function in {\tt Wolfram Mathematica}, yields a list of 26 families of 12-dimensional vectors. By analyzing this list and restricting to non-negative integer solutions, we obtain the 6 series of matrices listed in the proposition.

As is easy to check, the associativity conditions for cubes $(x_i\ast x_i)\ast x_i = x_i\ast (x_i\ast x_i)$ and for expressions involving the unit hold for all these series: for commutative cases this is automatic, and for the two noncommutative series it can be directly verified. Hence, systems (\ref{associativity_conditions}) and (\ref{non-full_associativity_conditions}) are equivalent, and we obtain a complete list of $n$-valued monoids on three elements.

The isomorphism statements follow directly from the structure of the series and from the fact that every isomorphism of $n$-valued monoids on $X$ is given either by the identity or by the map $\phi(x_1) = x_2$.

\end{proof}

\section{Corollaries}\label{props}

First of all, we note that the result of the Proposition agrees with the known classification of single-valued monoids of order 3 \cite{OEIS, monoids_classification}:

\begin{coroll}\label{1-monoids}
There are exactly seven isomorphism classes of single-valued monoids of order 3, given by the matrices $B_j$:
$$\begin{pmatrix} 1& 0 & 0\\ 0& 0& 1\\ 0& 0& 1\\ 0& 0& 1 \end{pmatrix}\in B_1,\quad
\begin{pmatrix} 0& 0& 1\\ 1& 0& 0\\ 1& 0& 0\\ 0& 1& 0 \end{pmatrix}\in B_2,\quad
\begin{pmatrix} 0& 0& 1\\ 0& 0& 1\\ 0& 0& 1\\ 0& 0& 1 \end{pmatrix}\in B_2,$$
$$\begin{pmatrix} 0& 1& 0\\ 0& 0& 1\\ 0& 0 & 1\\ 0& 1& 0 \end{pmatrix}\in B_4,\quad
\begin{pmatrix} 0& 1& 0\\ 0& 0& 1\\ 0& 0& 1\\ 0& 0& 1 \end{pmatrix}\in B_4,\quad
\begin{pmatrix} 0& 1& 0\\ 0& 1& 0\\ 0& 0& 1\\ 0& 0& 1 \end{pmatrix}\in B_5,\quad
\begin{pmatrix} 0& 1& 0\\ 0& 0& 1\\ 0& 1& 0\\ 0& 0& 1 \end{pmatrix}\in B_6.$$
\end{coroll}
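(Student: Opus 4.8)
The plan is to specialize the Proposition to $n = 1$, since a single-valued monoid of order $3$ is exactly a $1$-valued monoid on $X = \{e, x_1, x_2\}$. The key initial observation is that when $n = 1$ every row of $B$ consists of non-negative integers summing to $1$, so each row is one of the standard basis vectors $(1,0,0)$, $(0,1,0)$, $(0,0,1)$. Thus each single-valued monoid is recorded by a $4 \times 3$ matrix of zeros and ones having a single $1$ in every row, and by the completeness of the classification it suffices to read off, for each series $B_j$, the admissible specializations at $n = 1$.

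I would then run through the six series, the only delicate point being the fractional entries. In $B_1$ the entry $\frac{c_0 n}{a_0}$ forces $a_0 \neq 0$, hence $a_0 = 1$, and non-negativity of the last row then forces $c_0 = 0$; this leaves a single matrix. In $B_2$ the common denominator $n - a_0 - a_1$ forces $a_0 + a_1 \neq 1$, so the first row must equal $(0,0,1)$, i.e. $a_0 = a_1 = 0$; the three possible basis vectors for the (coinciding) second and third rows then give three matrices. The series $B_3$ yields three matrices, one per choice of basis vector in its last row, $B_4$ yields two, and the rigid series $B_5$, $B_6$ one each. Altogether this produces $1 + 3 + 3 + 2 + 1 + 1 = 11$ single-valued monoids.

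Finally I would pass to isomorphism classes. By the last assertion of the Proposition every isomorphism on $X$ fixing $e$ is either the identity or the transposition $\phi : x_1 \leftrightarrow x_2$, and for $n = 1$ proportionality of $0/1$-matrices is literal equality; hence two of the eleven matrices are isomorphic precisely when one is the $\phi$-image of the other. Here $\phi$ acts on a matrix by exchanging rows $1 \leftrightarrow 4$ and $2 \leftrightarrow 3$ (the reindexing of the products $x_1x_1, x_1x_2, x_2x_1, x_2x_2$) together with columns $2 \leftrightarrow 3$ (the relabelling of $x_1, x_2$). Computing these images glues the $B_1$ matrix to one matrix of $B_3$, one $B_2$ matrix to the $c_1 = 1$ matrix of $B_4$, a second $B_2$ matrix to another $B_3$ matrix, and the last $B_3$ matrix to the $c_1 = 0$ matrix of $B_4$, while the symmetric $B_2$ matrix and the matrices of $B_5$ and $B_6$ are $\phi$-invariant. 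The eleven matrices therefore split into exactly seven $\phi$-orbits, and selecting the representatives indicated gives the seven matrices in the statement.

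The main obstacle is purely one of bookkeeping: one must treat the degenerations of the fractional entries of $B_1$ and $B_2$ at $n = 1$ carefully, so that no admissible monoid is dropped and none is double-counted, and then track correctly which series are identified under $\phi$. With these points settled the count of seven is a direct finite verification, in agreement with the classical enumeration \cite{OEIS, monoids_classification}.
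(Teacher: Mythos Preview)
Your proof is correct. The paper states Corollary~\ref{1-monoids} without proof, treating it as an immediate specialization of the Proposition; your argument supplies exactly the verification one would expect, and the orbit analysis under $\phi$ (swapping rows $1\leftrightarrow 4$, $2\leftrightarrow 3$ and columns $2\leftrightarrow 3$) is carried out accurately, yielding the seven representatives listed.
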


To fix notation, we recall the following from \cite[Section 2]{Buchstaber}:

\begin{defi}
An {\it $n$-valued group} $X$ is an $n$-valued monoid equipped with an inverse map $\inv: X\to X$, that is, a map such that for every $x\in X$,
\begin{equation}\label{inv_condition}x\ast\inv(x)\ni e,\quad \inv(x)\ast x\ni e.\end{equation}
\end{defi}

\begin{coroll}\label{cor_1}
Every $n$-valued group of order 3 is given by a matrix of type $B_1$ or $B_2$. In the first case, one has
$$a_0 c_0 > 0,$$
while in the second case,
$$b_0 + a_0(b_0(n-b_0-b_1)+a_0b_1-a_1b_0) > 0.$$
\end{coroll}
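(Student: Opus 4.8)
The plan is to leverage the Proposition: an $n$-valued group is in particular an $n$-valued monoid, hence its multiplication matrix is one of $B_1,\dots,B_6$, and it remains only to decide for which of these an inverse map can be adjoined. I would begin by noting that the first column of $B$ records precisely the multiplicity of $e$ in each of the four nontrivial products: $a_0$ in $x_1\ast x_1$, $b_0$ in $x_1\ast x_2$, $d_0$ in $x_2\ast x_1$, and $c_0$ in $x_2\ast x_2$. Thus the defining conditions (\ref{inv_condition}) for $\inv$ become pure positivity statements about these four entries.

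Next I would pin down the possible shapes of $\inv$. Since $x\ast e=[x,\dots,x]$ never contains $e$ for $x\neq e$, the value $\inv(x_i)$ must lie in $\{x_1,x_2\}$, and the two generators may be treated independently. Spelling out the choices: $x_1$ admits an inverse iff $a_0>0$ (take $\inv(x_1)=x_1$) or else $b_0>0$ and $d_0>0$ (take $\inv(x_1)=x_2$), and symmetrically $x_2$ admits an inverse iff $c_0>0$, or else $b_0>0$ and $d_0>0$. Inspecting the first columns of $B_3,B_4,B_5,B_6$, in each case the three rows for $x_1\ast x_1$, $x_1\ast x_2$, $x_2\ast x_1$ all start with $0$, so $x_1$ can have no inverse; these four series are excluded at once, leaving $B_1$ and $B_2$.

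For $B_1$ one has $b_0=d_0=0$, so the only admissible assignment is $\inv(x_1)=x_1$ and $\inv(x_2)=x_2$, which forces $a_0>0$ and $c_0>0$, i.e. $a_0c_0>0$; note $a_0>0$ is anyway automatic, since the entry $c_0n/a_0$ must be defined, so the genuine constraint is $c_0>0$. For $B_2$ the rows $x_1\ast x_2$ and $x_2\ast x_1$ coincide, whence $d_0=b_0$, and the requirement that both generators be invertible reads: $a_0>0$ or $b_0>0$, and simultaneously $c_0>0$ or $b_0>0$.

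The one genuine obstacle is to compress this last disjunctive condition into the single polynomial inequality claimed for $B_2$. I would first apply the Boolean absorption identity $(P\vee Q)\wedge(R\vee Q)\equiv(P\wedge R)\vee Q$ to rewrite the requirement as: $b_0>0$, or both $a_0>0$ and $c_0>0$. Then I would substitute the defining relation $c_0\,(n-a_0-a_1)=b_0(n-b_0-b_1)+a_0b_1-a_1b_0$, which holds on the $B_2$ family precisely because the last row is obtained by dividing by $n-a_0-a_1$; in particular $n-a_0-a_1>0$ there. Since this factor is positive and all entries are non-negative integers, $a_0c_0(n-a_0-a_1)>0$ is equivalent to $a_0>0$ and $c_0>0$, so adding the non-negative term $b_0$ gives exactly $b_0+a_0\big(b_0(n-b_0-b_1)+a_0b_1-a_1b_0\big)>0$. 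The only subtlety worth double-checking is that $n-a_0-a_1>0$ throughout $B_2$, as this underpins both the substitution and the final equivalence.
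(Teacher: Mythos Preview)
Your argument is correct and follows exactly the approach the paper indicates: the paper's own proof is the single sentence ``the statement follows from examining the first columns of the matrices $B_j$,'' and you have simply carried out that examination in full, including the Boolean rewriting and the substitution $c_0(n-a_0-a_1)=b_0(n-b_0-b_1)+a_0b_1-a_1b_0$ needed to turn the disjunctive invertibility condition for $B_2$ into the stated single inequality. Your check that $n-a_0-a_1>0$ on $B_2$ (it is the nonnegative $(1,3)$-entry and also a nonzero denominator) is exactly the subtlety that makes the last equivalence go through.
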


\begin{proof}
The statement follows from examining the first columns of the matrices $B_j$ in the classification of $n$-valued monoids of order 3.

\end{proof}

\begin{coroll}\label{cor_2}
Every commutative $n$-valued monoid of order 3 belongs to one of the first four series. Every noncommutative monoid is a diagonal {\normalfont(\ref{diagonal})} of one of the two noncommutative single-valued monoids {\normalfont(\ref{1-non-comm-monoids})}. In particular, all $n$-valued groups of order 3 are commutative, and there exist only two noncommutative multivalued monoids up to isomorphism.
\end{coroll}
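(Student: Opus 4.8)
The plan is to deduce everything by inspection from the classification just established, where every $n$-valued monoid on $X=\{e,x_1,x_2\}$ is one of the matrices $B_1,\dots,B_6$. The key observation is that, in the notation of $(\ref{vector_equality})$, the second and third rows of $B$ record the products $x_1\ast x_2$ and $x_2\ast x_1$ respectively, so the monoid is commutative precisely when these two rows coincide. First I would run through the six series: in $B_1,B_2,B_3,B_4$ the middle two rows are literally identical (e.g. $(b_0,b_1,n-b_0-b_1)$ twice in $B_2$, and $(0,0,n)$ twice in $B_1$ and $B_4$), so each of these monoids is commutative; by contrast in $B_5$ and $B_6$ the middle rows are $(0,n,0)$ and $(0,0,n)$ in opposite orders, hence these two series are noncommutative. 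This establishes the first sentence at once: every commutative monoid of order $3$ lies in one of the first four series, and the noncommutative ones are exactly $B_5$ and $B_6$.

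Next I would identify $B_5$ and $B_6$ with the diagonals $(\ref{diagonal})$ of the two single-valued monoids $(\ref{1-non-comm-monoids})$. For the left-projection monoid $x_i\ast x_j=x_i$, applying $\diag_n$ replaces each product by the multiset $[x_i,\dots,x_i]$ of size $n$, so the rows become $(0,n,0),(0,n,0),(0,0,n),(0,0,n)$, which is exactly $B_5$; the right-projection monoid $x_i\ast x_j=x_j$ yields $(0,n,0),(0,0,n),(0,n,0),(0,0,n)=B_6$ in the same way. Thus every noncommutative $n$-valued monoid of order $3$ is a diagonal of one of the two monoids in $(\ref{1-non-comm-monoids})$, which is the second assertion.

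For the ``in particular'' clauses I would argue as follows. By Corollary $\ref{cor_1}$ every $n$-valued group of order $3$ is of type $B_1$ or $B_2$, and both of these are commutative by the first step, so all such groups are commutative; equivalently, in $B_5$ and $B_6$ the $e$-column of the rows indexed by $x_i\ast x_j$ vanishes, so $e$ never appears in a product of non-identity elements and the inverse axiom $(\ref{inv_condition})$ cannot be satisfied. For the count, the Proposition already asserts that each of $B_5,B_6$ contains a single monoid up to isomorphism, and these arise as diagonals of non-isomorphic single-valued monoids; since the ratio $m^z_{x,y}/n$ of Definition $\ref{monoid_isomorphisms}$ is unchanged under $\diag_n$, the diagonal construction reflects isomorphism, so $B_5\not\cong B_6$ and there are exactly two noncommutative multivalued monoids of order $3$.

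The only genuinely computational step is the diagonal identification of the previous paragraph, which is routine. The one point deserving care is the non-isomorphism $B_5\not\cong B_6$: since every isomorphism of monoids on $X$ is either the identity or the swap $\phi(x_1)=x_2$, and the swap relabels the left-projection structure $x_i\ast x_j=x_i$ into itself (it again sends the product of any two elements to the left factor), neither admissible map carries $B_5$ to $B_6$. Combined with the ratio-invariance under $\diag_n$, this rules out any coincidence between the two noncommutative series and completes the argument.
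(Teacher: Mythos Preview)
Your proposal is correct and proceeds exactly as the paper intends: the corollary is stated in the paper without a separate proof because it is an immediate reading-off of the Proposition and Corollary~\ref{cor_1}, and you have supplied precisely that reading-off (comparing the middle two rows of each $B_j$, invoking the diagonal description of $B_5,B_6$ already contained in the Proposition, and quoting Corollary~\ref{cor_1} for the group part). The only extra work you do beyond the paper is spelling out why $B_5\not\cong B_6$, which the Proposition asserts but does not argue; your check via the swap $\phi$ and the ratio-invariance of Definition~\ref{monoid_isomorphisms} is fine.
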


Let us note that the commutativity of $n$-valued groups of order 3 was first established in \cite[Lemma 10]{Yagodovskii}. That work also introduced an approach to the classification of $n$-valued groups in $\R^3$ using the theory of algebraic varieties and linear deformations of group $n$-algebras. However, no explicit classification of all $n$-valued groups of order 3 was obtained there.

Following V. Buchstaber (private communication), for every $n$-valued group on a set $X$, in addition to the inverse map $\inv: X \to X$, we define a multivalued inverse map $\Inv: X \to \Pp(X)$:

\begin{defi}
The map $\Inv: X\to \Pp(X)$ takes values in the set of all finite subsets of $X$. It is defined by
$$\Inv(x) = \{ y\in X \mid x\ast y \ni e,\quad y\ast x\ni e \}.$$
\end{defi}

\begin{defi}\label{reversible}
An $n$-valued group has a uniquely defined inverse map if and only if $\inv = \Inv$. We call such groups {\it reversible}; otherwise, we call them {\it non-reversible}.
\end{defi}

\begin{example}
Clearly, every coset group is reversible.
\end{example}

It was observed by V. Buchstaber that the example given in \cite[page 246]{Borovik} as a non-reversible 2-valued group is, in fact, incorrect. It arises as the reduction modulo $n$ of the 2-valued group $\G_2$ (see, e. g.,  \cite[Example 2]{BK}). Namely, consider the set of (non-negative) residues $X = {0, 1, …, n-1}$ modulo $n$ with the operation
$$x\ast y = [x + y\ \mathrm{mod}\  n,\ |x - y|\ \mathrm{mod}\  n].$$
This set $X$ with such an operation does not form a 2-valued group, since associativity fails:
\begin{align*}
(1\ast 1)\ast (n - 1) &= [0, 2]\ast (n - 1) \\
&= [n-1, n-1, 1, n-3] \\
&\neq [1, 1, n-1, n-3] \\
&= 1\ast[0, n-2] \\
&= 1\ast(1\ast(n - 1)).
\end{align*}

This observation leads naturally to the question of whether non-reversible $n$-valued groups actually exist.

\begin{example}
It is easy to see that every $n$-valued group $\G$ on the singleton set $\{0\}$ is a diagonal construction \cite[Lemma 1]{Buchstaber} of the trivial group, and is therefore reversible. All $n$-valued groups $\G$ on the two-element set $\{0, 1\}$ with unit $0$ are also reversible: each such $n$-valued group is determined by a pair of non-negative integers $(a, b)$ with $a + b = n$, $a > 0$, since there is only one nontrivial relation: $1\ast 1 = a\cdot 0 + b\cdot 1$.
\end{example}

\begin{example}\label{nonreversible_example}
For each $n \geqslant 3$, there exists a non-reversible commutative cyclic \cite[Definition 1.3]{BuchVesnin} $n$-valued group $\X_n$ on the set $\{0, 1, 2\}$ (with unit $0$), in which the inverse map $\inv$ may be defined in four different ways: $\Inv(1) = \Inv(2) = \{1, 2\}$. The multiplication in this group is given by:
$$
\begin{aligned}
1 \ast 1 &= 0 + (n-1)\cdot 2, \\
1 \ast 2 &= 0 + 1 + (n-2)\cdot 2, \\
2 \ast 2 &= 0 + 2\cdot 1 + (n-3)\cdot 2.
\end{aligned}
$$
It is clear that $\mathbb{X}_n$ belongs to the series $B_2$. The group $\mathbb{X}_n$ is non-reversible, therefore it is noncoset. The first known examples of (other) non-coset groups were constructed in \cite{Vershik} using the technique of combinatorial algebras and Sylow-type theorems from finite group theory.
\end{example}

\begin{coroll}\label{cor_3}
An $n$-valued group of order 3 is non-reversible if and only if it belongs to the series $B_2$ with
$$b_0(a_0 + b_0(n - b_0 - b_1) + a_0b_1 - a_1b_0) > 0.$$
\end{coroll}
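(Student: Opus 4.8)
The plan is to prove Corollary \ref{cor_3} by combining the classification from the Proposition with the reversibility criterion, reducing everything to an explicit condition on the matrix entries of series $B_2$. By Corollary \ref{cor_1}, every $n$-valued group of order $3$ lies in series $B_1$ or $B_2$, so I would first dispose of $B_1$ and then characterize non-reversibility inside $B_2$.

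\textbf{Step 1: the $B_1$ case is always reversible.} For a matrix of type $B_1$, I would compute $\Inv(x_1)$ and $\Inv(x_2)$ directly from the multiplication table. The relevant data is which products $x_i \ast x_j$ contain the unit $e$, i.e.\ which first-column entries $a_0, b_0, d_0, c_0$ are positive. In $B_1$ the rows for $x_1 \ast x_2$ and $x_2 \ast x_1$ have first entry $0$, so $e \notin x_1 \ast x_2$; combined with the group condition $a_0 c_0 > 0$ from Corollary \ref{cor_1}, one checks that $\Inv(x_1) = \{x_1\}$ and $\Inv(x_2) = \{x_2\}$ are forced to be singletons, whence $\Inv = \inv$ and the group is reversible. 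So no $B_1$ group is non-reversible, consistent with the claimed criterion singling out $B_2$.

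\textbf{Step 2: the $B_2$ criterion.} Here I would write out, for a $B_2$ matrix, the four first-column multiplicities — $a_0$ (in $x_1 \ast x_1$), $b_0$ (in $x_1 \ast x_2$ and in $x_2 \ast x_1$, which share a row), and $d_0$ (the bottom-row first entry, equal to $\frac{b_0(n-b_0-b_1)+a_0 b_1 - a_1 b_0}{n-a_0-a_1}$) — and determine $\Inv(x_1)$, $\Inv(x_2)$ from when each is positive. Non-reversibility means some $\Inv(x_i)$ has more than one element, i.e.\ there is a choice in defining the inverse. The key observation is that $x_1$ has a non-unique inverse precisely when $e$ appears in more than one of $x_1 \ast x_1$, $x_1 \ast x_2$; symmetrically for $x_2$. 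Translating "at least one $x_i$ admits two candidate inverses" into the entries, the condition becomes that both a "diagonal" multiplicity and an "off-diagonal" multiplicity are positive simultaneously. I expect this to reduce exactly to the product
$$
b_0\bigl(a_0 + b_0(n - b_0 - b_1) + a_0 b_1 - a_1 b_0\bigr) > 0,
$$
where the first factor $b_0$ detects $e \in x_1 \ast x_2$ (giving $x_2 \in \Inv(x_1)$), and the second factor $a_0 + \bigl(b_0(n-b_0-b_1)+a_0 b_1 - a_1 b_0\bigr)$ detects that $x_1$ or $x_2$ \emph{also} has itself as an inverse (via $a_0 = m^e_{x_1,x_1}$ or via the numerator of $d_0 = m^e_{x_2,x_2}$), so that some $\Inv(x_i)$ genuinely contains two elements.

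\textbf{Main obstacle.} The hard part is the bookkeeping to show the two factors in the displayed product capture \emph{exactly} the failure of $\inv = \Inv$, neither over- nor under-counting. In particular I must verify that when $b_0 > 0$ the element $e$ cannot be "accidentally" forced into both products in a way that still yields a unique inverse, and that the second factor, written as a sum rather than a product, correctly encodes the disjunction "$x_1$ has two inverses \emph{or} $x_2$ has two inverses." Because $d_0$ is a fraction, I would clear the denominator $n - a_0 - a_1 > 0$ (positive since it is the third-column entry of a genuine group, cf.\ Corollary \ref{cor_1}) so that positivity of $d_0$ is equivalent to positivity of its numerator; this justifies replacing the fractional entry by the polynomial $b_0(n-b_0-b_1)+a_0 b_1 - a_1 b_0$ inside the criterion. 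Once the positivity of each factor is matched to the correct geometric statement about $\Inv$, the equivalence follows, completing the proof.
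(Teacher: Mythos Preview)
Your proposal is correct and follows exactly the approach the paper implicitly intends: the paper states Corollary~\ref{cor_3} without proof, treating it as an immediate consequence of the Proposition and Corollary~\ref{cor_1}, and your argument is precisely the direct inspection of the first-column entries of $B_1$ and $B_2$ that makes this explicit. One small inaccuracy: the positivity $n-a_0-a_1>0$ is not really a consequence of Corollary~\ref{cor_1} but simply of the fact that $a_2=n-a_0-a_1$ is a nonnegative multiplicity and appears as a denominator in the definition of the series $B_2$, so it is nonzero there; otherwise your reduction of the disjunction ``$|\Inv(x_1)|=2$ or $|\Inv(x_2)|=2$'' to the product $b_0\bigl(a_0+b_0(n-b_0-b_1)+a_0b_1-a_1b_0\bigr)>0$ is exactly right.
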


Let us reproduce here the following \cite[Definition 1.2]{Vershik} in order to fix notation:

\begin{defi}\label{bigstar_inv}
An $n$-valued group $(X, \ast, \inv)$ with multiplication $\ast$ and inverse map $\inv$ is called {\it $\bigstar$-involutive} if the inverse map $x\mapsto \inv(x)$ is an involution satisfying the following conditions:
\begin{enumerate}[\bf (i)]
\item For any $x\in X$, one has $x\ast y\ni e$ if and only if $y = \inv(x)$.

\item $m(x) = m(\inv(x))$, where $m(x) = m^{e}_{x, \inv(x)}$ is the multiplicity of $e$ in the product $x\ast\inv(x)$.

\item $\inv(x\ast y)= \inv(y)\ast\inv(x)$ for all $x, y\in X$.
\end{enumerate}
\end{defi}

\begin{remark*}
In \cite{Vershik, Buchstaber, Ponomarenko_Vasiliev_24}, $\bigstar$-involutive groups were simply called involutive. The notion of $\bigstar$-involutivity is motivated by $C^{*}$-algebras from functional analysis and noncommutative geometry. Let us note that in \cite{Buchstaber}, immediately after the definition in Section 11, there is an unjustified assertion that condition {\bf (iii)} follows from condition {\bf (i)}.
\end{remark*}

\begin{example}
Every $n$-valued coset group is $\bigstar$-involutive \cite[page 13]{Vershik}.
\end{example}

From condition {\bf (i)} in Definition \ref{bigstar_inv}, it follows that every $\bigstar$-involutive group is reversible. 

\begin{coroll} \label{cor_4}
Every $\bigstar$\hyp{}involutive $n$\hyp{}valued group has as its matrix $B$ in {\normalfont (\ref{vector_equality})} one of the following:
$$\widetilde{B}_1 = \begin{pmatrix}
m(x_1) & a(x_1) & n - m(x_1) - a(x_1) \\
0 & a(x_1, x_2) & n - a(x_1, x_2) \\
0 & a(x_1, x_2) & n - a(x_1, x_2)\\
m(x_2) & a(x_2) & n - m(x_2) - a(x_2)
\end{pmatrix},$$
or
$$\widetilde{B}_2 = \begin{pmatrix}
0 & a(x_1) & n - a(x_1)\\
n - 2a(x_1) & a(x_1) & a(x_1) \\
n - 2a(x_1) & a(x_1) & a(x_1)\\
0 & n - a(x_1) & a(x_1)
\end{pmatrix},$$
where
$$a(x_1, x_2) = r(n - m(x_1) - a(x_1)),$$
$$a(x_2) = r(n - a(x_1, x_2)),$$
$$r = \frac{m(x_2)}{m(x_1)},$$
and $m(x_1)$, $m(x_2)$, $a(x_1)$ are free parameters.
\end{coroll}

\begin{proof}
Indeed, according to \cite[Theorem 2.1]{Ponomarenko_Vasiliev_24}, every $\bigstar$\hyp{}involutive $n$\hyp{}valued group of order three may have, as the matrix $B$ in (\ref{vector_equality}), either the matrix $\widetilde{B}_1$ or $\widetilde{B}_2$. The matrices $\widetilde{B}_2$ belong to the series $B_2$. The matrix $\widetilde{B}_1$ belongs to the series $B_1$ when $m(x_1) + a(x_1) = n$, and belongs to the series $B_2$ otherwise.

\end{proof}

\begin{remark*}
In \cite[Theorem 2.1]{Ponomarenko_Vasiliev_24} only explicit necessary conditions (in our notation) on the entries of the matrix $B = (x_i\ast x_j)_{ij}$ are stated; it is not claimed that the corresponding $n$-valued groups arise for all possible values of the parameters $m(x_1)$, $m(x_2)$, and $a(x)$.
\end{remark*}

As shown in \cite[Theorem 1.1]{Ponomarenko_Vasiliev_24}, an $n$-valued group from the series $\widetilde{B}_2$ with $n = 2k + 1$ is a coset group if and only if
\begin{equation}\label{4k+3}
\widetilde{B}_2 =
\begin{pmatrix}
0 & k & k+1 \\
1 & k & k \\
1 & k & k \\
0 & k+1 & k
\end{pmatrix},
\end{equation}
and $4k + 3$ is a power of a prime number. The classification of coset groups from the series $\widetilde{B}_1$ is related to the classification of finite single-valued permutation groups of rank 3 and the theory of strongly regular graphs \cite{Ponomarenko_Vasiliev_24}.

In the paper \cite{BuchVesGaif}, in connection with Conway’s topograph, the notion of an involutive 2-valued group was introduced.

\begin{defi}\label{involutive_group}
A reversible $n$-valued group on a set $X$ is called {\it involutive} if $\inv(x) = x$ for every $x\in X$.
\end{defi}

Clearly, every commutative involutive group is $\bigstar$-involutive. The article \cite{BuchVesGaif} gives a complete classification of involutive 2-valued groups. Let us note that in that paper, in Remark 1.5, there is an inaccuracy: involutivity of an $n$-valued group in the sense of \cite{Vershik} is not limited to the requirement of reversibility and involutivity of the map $\inv$, since two further conditions must also be satisfied (see {\bf (ii)} and {\bf (iii)} in Definition \ref{bigstar_inv} of the present paper). The statement given there (in our terminology) that the class of $\bigstar$-involutive groups contains the class of involutive groups is by no means obvious and follows for $n = 2$ from the (later) work \cite{Gaifullin_24}, in which it is shown that every involutive $2$-valued group is commutative. Examples of involutive non-coset groups can be found in \cite[Proposition 4.4]{BuchVesGaif}.

\begin{coroll}\label{cor_5}
Groups from the series $B_1$ are involutive. A group from the series $B_2$ is involutive if and only if $b_0 = 0$.
\end{coroll}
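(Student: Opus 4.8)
The plan is to reformulate involutivity as a condition on the first column of the defining matrix and then read it off in each series. By Corollary \ref{cor_2} every $n$-valued group of order $3$ is commutative, so the two requirements $x \ast \inv(x) \ni e$ and $\inv(x) \ast x \ni e$ coincide and $\Inv(x) = \{\, y \mid e \in x \ast y \,\}$. Since an involutive group is $\bigstar$-involutive, hence reversible, the condition $\inv(x) = x$ is equivalent to $\Inv(x) = \{x\}$ for every $x$ — each element is its own \emph{and only} inverse. (Conversely, if $\Inv(x)=\{x\}$ for all $x$, the inverse map is forced to be the identity, so the group is involutive.) Because $e \ast x = x \ast e = [x,\dots,x]$ contains $e$ only for $x = e$, for $i,j \geqslant 1$ the multiplicity of $e$ in $x_i \ast x_j$ equals the corresponding first-column entry of the matrix; thus I only need to track which of these entries are positive.

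For the series $B_1$ I would invoke Corollary \ref{cor_1}, which for a group forces $a_0 c_0 > 0$, hence $a_0 > 0$ and $c_0 > 0$. Reading the first column of $B_1$, the off-diagonal products satisfy $x_1 \ast x_2 = x_2 \ast x_1 = n\,x_2 \not\ni e$, while $e$ appears in $x_1 \ast x_1$ with multiplicity $a_0 > 0$ and in $x_2 \ast x_2$ with multiplicity $c_0 > 0$. Hence $\Inv(x_1) = \{x_1\}$ and $\Inv(x_2) = \{x_2\}$, the inverse map is uniquely the identity, and every $B_1$ group is involutive.

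For $B_2$ the same reading gives: $x_1 \in \Inv(x_1)$ iff $a_0 > 0$ and $x_2 \in \Inv(x_1)$ iff $b_0 > 0$, while $x_1 \in \Inv(x_2)$ iff $b_0 > 0$ and $x_2 \in \Inv(x_2)$ iff $d_0 > 0$, where $d_0 = \frac{b_0(n-b_0-b_1)+a_0b_1-a_1b_0}{n-a_0-a_1}$ is the $(4,1)$ entry. Involutivity, i.e. $\Inv(x_1)=\{x_1\}$ and $\Inv(x_2)=\{x_2\}$, is therefore equivalent to the three conditions $a_0 > 0$, $d_0 > 0$, and $b_0 = 0$. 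The direction ``involutive $\Rightarrow b_0 = 0$'' is then immediate, since $b_0 = 0$ is one of these conditions; equivalently, condition \textbf{(i)} of Definition \ref{bigstar_inv} gives $e \in x_1 \ast y \Leftrightarrow y = \inv(x_1) = x_1$, and taking $y = x_2 \neq x_1$ forces $x_1 \ast x_2 \not\ni e$. This is exactly how $\X_n$ of Example \ref{nonreversible_example}, for which $b_0 = 1$, fails to be involutive despite having $e$ in each $x_i \ast x_i$.

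The converse $b_0 = 0 \Rightarrow$ involutive carries the actual work. Setting $b_0 = 0$ in the group condition of Corollary \ref{cor_1} collapses it to $a_0^2 b_1 > 0$, forcing $a_0 > 0$ and $b_1 > 0$; and setting $b_0 = 0$ in $d_0$ yields $d_0 = \frac{a_0 b_1}{n-a_0-a_1}$, whose numerator is then positive. Consequently $\Inv(x_1) = \{x_1\}$ and $\Inv(x_2) = \{x_2\}$, so the group is involutive. I expect the main obstacle to be twofold: first, the conceptual point that involutivity must incorporate \emph{uniqueness} of the inverse (so that $\X_n$ is correctly excluded), which I resolve through the reduction to $\Inv(x) = \{x\}$ above; and second, the technical need to guarantee $n - a_0 - a_1 \neq 0$, which I would justify by noting that $n - a_0 - a_1$ is the multiplicity of $x_2$ in $x_1 \ast x_1$ and that its vanishing is excluded from the series $B_2$ (the defining entries would otherwise be undefined), so that $d_0 > 0$ genuinely holds.
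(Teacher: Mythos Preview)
Your proof is correct. The paper states Corollary~\ref{cor_5} without proof, leaving it as an immediate consequence of the classification; your argument---reducing involutivity to the condition $\Inv(x)=\{x\}$ via the chain involutive $\Rightarrow$ $\bigstar$-involutive $\Rightarrow$ reversible, then reading this off from the first column of $B_1$ and $B_2$ together with the group constraints of Corollary~\ref{cor_1}---is exactly the intended direct verification. One notational slip: in the paper's convention~\eqref{vector_equality} the $(4,1)$ entry is $c_0$, while $d_0$ denotes the $(3,1)$ entry (which equals $b_0$ for the commutative series $B_2$); this relabelling does not affect your computations.
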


\newpage

\bibliographystyle{mystyle}
\bibliography{data}

\vspace{2em}
\noindent
\textit{Mikhail Kornev}\\
HSE University\\
Email: \texttt{mkorneff@mi-ras.ru}

\end{document}